\newcommand{\bes}{\begin{equation*}}
\newcommand{\ees}{\end{equation*}}
\newcommand{\be}{\begin{equation}}
\newcommand{\ee}{\end{equation}}
\newcommand{\bbm}{\begin{bmatrix}}
\newcommand{\ebm}{\end{bmatrix}}
\newcommand{\R}{\mathbb{R}}
\newcommand{\Rn}{\R^n}
\newcommand{\Rnn}{\R^{n \times n}}
\newcommand{\Rm}{\R^m}
\newcommand{\Rmm}{\R^{m \times m}}
\newcommand{\C}{\mathbb{C}}
\newcommand{\Cm}{\C^m}
\newcommand{\N}{\mathbb{N}}
\newtheorem{theorem}{Theorem}[section]
\newtheorem{lemma}[theorem]{Lemma}
\newtheorem{remark}[theorem]{Remark}
\author{Carl Christian Kjelgaard Mikkelsen \\ \texttt{spock@cs.umu.se}} \affil{Department of Computing Science and HPC2N \\ Ume{\aa} University \\ Sweden}
\title{Well-conditioned eigenvalue problems that  overflow}
\begin{document}

\maketitle

\abstract{In this note we present a parameterized class of lower triangular matrices.
The components of the eigenvectors grow rapidly and will exceed the representational range of any finite number system.
The eigenvalues and the eigenvectors are well-conditioned with respect to componentwise relative perturbations of the matrix. 
This class of matrices is well suited for testing software for computing eigenvectors as these routines must be able to handle overflow successfully. }

\section{Introduction} \label{sec:introduction}

Given a matrix$\bm{A} \in \Rmm$ the standard eigenvalue problem consists of finding eigenvalues $\lambda \in \C$ and eigenvectors $\bm{x} \in \Cm$ such that
\bes
\bm{A} \bm{x} = \lambda \bm{x}.
\ees
If $\bm{A}$ is a dense nonsymmetric matrix then the standard eigenvalue problem is often solved by first reducing $\bm{A}$ to Hessenberg form 
\bes
\bm{H} = \bm{Q}_1^T \bm{A} \bm{Q}_1
\ees
and then to real Schur form 
\bes
\bm{S} = \bm{Q}_2^T \bm{H} \bm{Q}_2
\ees
using orthogonal similarity transformations.
Here the matrix $\bm{H}$ is upper Hessenberg and the matrix $\bm{S}$ is upper quasi-triangular with diagonal blocks that are either $1$-by-$1$ or $2$-by-$2$.
The eigenvalues of $\bm{A}$ can be determined from the diagonal blocks of $\bm{S}$.
Every 1-by-1 diagonal block on the diagonal of $\bm{S}$ is a real eigenvalue of $\bm{A}$ and every $2$-by-$2$ block on the diagonal of $\bm{S}$ specifies a pair of complex conjugate eigenvalues of $\bm{A}$.
The eigenvectors of $\bm{S}$ can be computed using a variant of substitution and transformed to eigenvectors of $\bm{A}$.
Specifically, if $\bm{y} \not = 0$ satisfies $\bm{S}\bm{y} = \lambda \bm{y}$, then $\bm{x} = \bm{Q}_1\bm{Q}_2 \bm{y}$ is an eigenvector of $\bm{A}$ corresponding to the eigenvalue $\lambda$.

However, substitution is very vulnerable to floating point overflow and special software has been developed to handle this problem.
We say that an algorithm is robust if all intermediate and final results are in the representable range, i.e., overflow is prevented.
In LAPACK \cite{lapack} the robust subroutines for computing eigenvectors are all derived from {\tt xlatrs}  \cite{anderson1991robust-triangular}.
In Elemental \cite{moon2016accelerating-eigenvector} there are parallel subroutines that can be used to compute eigenvectors of triangular matrices, but they are not fully robust.
In StarNEig \cite{myllykoski2020introduction-to} there are parallel robust subroutines for computing standard and generalized eigenvectors from matrices or matrix pairs in real Schur form.

The contribution of this note is to exhibit a class of lower triangular matrices that can be used to test robust solvers. The components of the eigenvectors grow rapidly and will exceed the representational range of any finite set of numbers sooner rather than later. 
Moreover, the eigenvalues and the eigenvectors are well-conditioned with respect to componentwise relative  perturbations of the matrix.
These results are all established in Section \ref{sec:main-results}.
We briefly cover the transformation to upper triangular problems in Section \ref{sec:upper-triangular-problems} and finish with some concluding remarks in Section \ref{sec:conclusion}.

Here we offer the following small example as an appetizer. Consider the matrices $\bm{A}$ and $\bm{X}$ given by 
\bes
\bm{A} = 
\bbm 
1 & & & & \\
-5 & 2 & & & \\
-5 & -5 & 3 & & \\
-5 & -5 & -5 & 4 & \\
-5 & -5 & -5 & -5 & 5 
\ebm, \quad \bm{X} = \bbm 
1 & & & & \\
5 & 1 & & & \\
15 & 5 & 1 & & \\
35 & 15 & 5 & 1 & \\
70 & 35 & 15 & 5 & 1
\ebm.
\ees
It is straightforward to verify that the $j$th column of $\bm{X}$ is an eigenvector of $\bm{A}$ corresponding to the eigenvalue $\lambda_j = j$ and $x_{ij} \ge 2^{i-j}$. 
However, by Theorem \ref{thm:eigenvectors} and Lemma \ref{lem:exponential-growth} these properties are preserved when our example is generalized to any dimension $m$.
Moreover, the nontrivial components of the  first column of $\bm{X} = [x_{ij}]$ can be obtained by solving the linear system
\bes
\bbm
1 &  & & \\
-5 & 2 & & \\
-5 & -5 & 3 & \\
-5 & -5 & -5 & 4 
\ebm
\bbm
x_{21} \\ 
x_{31} \\ 
x_{41} \\
x_{51}
\ebm
=
\bbm
5 \\ 5 \\ 5 \\ 5
\ebm.
\ees
In this case, forward substitution consists of adding and dividing real numbers that are strictly positive. Hence it is not surprising that this eigenvector is  well-conditioned with respect to componentwise relative perturbations of the matrix $\bm{A}$. 
A general upper bound for the relevant condition numbers is established as Theorem \ref{thm:skeel-condition-number:bound}.

\section{Auxiliary results} \label{sec:auxiliary-results}
In this section we derive a set of elementary results related to the solution of very special triangular linear systems. 
These results are used to prove the main results.

Consider the very special linear system given by
\be \label{equ:general:system}
\bm{G} \bm{x} :=
\bbm
d_1 & & & \\
-c & d_2 & & \\
\vdots & \ddots & \ddots & \\
-c & \dots & -c & d_m
\ebm  \bbm x_1 \\ x_2 \\ \vdots \\ x_m \ebm = \bbm c \\ c \\ \vdots \\ c \ebm =: \bm{f}
\ee
where $\bm{G} \in \Rmm$ and $\bm{f} \in \Rm$.  
If the diagonal elements are nonzero, then $\bm{G}$ is nonsingular and the unique solution $\bm{x}$ can be found using forward substitution. The familiar formula takes the form:
\be \label{equ:substitution}
x_k = a_k \left(1 + \sum_{j=1}^{k-1} x_j\right), \quad a_k = \frac{c}{d_k}.
\ee
The components of $\bm{x}$ can also be expressed compactly using the following theorem.
\begin{theorem} \label{thm:solution} 
If $d_j \not =0$ for all $j$, then the solution of the linear system \eqref{equ:general:system} is given by
  \bes
  x_k = a_k \omega_k
  \ees
  where
  \bes
  \forall k \in \{1,2,\dotsc,m\} : \: \omega_k = \prod_{j=1}^{k-1} ( 1+a_j), \quad a_k = \frac{c}{d_k}.
  \ees
\end{theorem}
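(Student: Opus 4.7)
The plan is to prove the formula by induction on $k$, using the forward-substitution recurrence \eqref{equ:substitution} as the engine and exploiting a telescoping identity to evaluate the resulting sum in closed form.

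For the base case $k=1$, the recurrence \eqref{equ:substitution} gives $x_1 = a_1(1+0) = a_1$, and the empty product convention makes $\omega_1 = 1$, so $x_1 = a_1\omega_1$ as claimed. For the inductive step, I would fix $k \ge 2$, assume $x_j = a_j \omega_j$ for all $j < k$, and substitute this into \eqref{equ:substitution} to get
\[
x_k = a_k\left(1 + \sum_{j=1}^{k-1} a_j \omega_j\right).
\]
The remaining task is to show that $1 + \sum_{j=1}^{k-1} a_j \omega_j = \omega_k$.

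The key observation, and the only real idea in the proof, is the telescoping identity $a_j \omega_j = \omega_{j+1} - \omega_j$. This follows immediately from the definition of $\omega_j$, because
\[
\omega_{j+1} = \prod_{i=1}^{j}(1+a_i) = (1+a_j)\omega_j = \omega_j + a_j \omega_j.
\]
Summing from $j=1$ to $j=k-1$ collapses the right-hand side to $\omega_k - \omega_1 = \omega_k - 1$, and rearranging gives exactly $1 + \sum_{j=1}^{k-1} a_j \omega_j = \omega_k$. Combining with the displayed expression for $x_k$ completes the induction.

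There is no real obstacle here; the only thing one could stumble over is spotting the telescoping, which is very natural once one writes $\omega_{j+1}/\omega_j = 1+a_j$. An alternative would be to prove the identity $1 + \sum_{j=1}^{k-1} a_j \omega_j = \omega_k$ by a separate induction on $k$, but the telescoping argument is cleaner and avoids a nested induction.
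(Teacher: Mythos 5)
Your proof is correct and follows essentially the same route as the paper: both arguments reduce the claim to the identity $\omega_k = 1 + \sum_{j=1}^{k-1} a_j \omega_j$ and both derive it from the relation $\omega_{j+1} = (1+a_j)\omega_j$. The paper establishes that identity by a separate minimal-counterexample (well-ordering) induction rather than your one-line telescoping sum, but this is a presentational difference only, and your telescoping version is arguably the cleaner of the two.
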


\begin{proof} The proof is  split into two steps.
\begin{enumerate}
\item We begin by showing that 
\be \label{equ:product}
\omega_k = 1 + \sum_{i=1}^{k-1} a_i \omega_i.
\ee
Let $S_m = \{1,2,\dotsc,m\}$ and let $V \subseteq S_m$ be given by
  \bes
  V = \left \{ j \in S_m \: : \: \omega_j = 1 + \sum_{i=1}^{j-1} a_i \omega_i \right \}.
  \ees
  It is clear that $1 \in V$, because $\omega_1 = 1$. Now assume that $S_m \setminus V \not = \emptyset$. Then $S_m \setminus V$ has a smallest element $k$. We must have $k > 1$ because $1 \in V$. By definition of $\omega_k$ we have
  \bes
  \omega_k = (1 + a_{k-1})  \omega_{k-1} = \omega_{k-1} + a_{k-1} \omega_{k-1}.
  \ees
  Since $k$ is the smallest element of $V$ we must have $k-1 \in V$ and we can therefore write
  \bes
  \omega_{k-1} = 1 + \sum_{j=1}^{k-2} a_i \omega_i.
  \ees
  It follows that
  \bes
    \omega_k = \omega_{k-1} + a_{k-1} \omega_{k-1} = \left [ 1 + \sum_{i=1}^{k-2} a_i \omega_i \right] + a_{k-1} \omega_{k-1} \\
    = 1 + \sum_{i=1}^{k-1}  a_i \omega_i.
  \ees
  We conclude that $k \in V$. This is a contradiction, because $k \in S_m \setminus V$. Therefore, we must have $V = S_m$.

\item We will now show that
\bes
    x_k = a_k \omega_k. 
\ees
Let $W \subseteq S_m = \{1,2,\dotsc,m\}$ be given by
  \bes
  W = \{ j \in S_m \: : \: x_j = a_j \omega_j \}.
  \ees
  It is clear that $1 \in W$ because $\omega_1 = 1$ and $x_1 = a_1$ implies $x_1 = a_1 \omega_1$. Now assume that $S_m \setminus W \not = \emptyset$. Then $S_m \setminus W$ has a smallest element $k$. Equation \eqref{equ:substitution} is simply the statement that
  \bes
  x_k = a_k \left(1 + \sum_{j=1}^{k-1} x_j \right).
  \ees
  Now since $k$ is the smallest element of $S_m \setminus W$ we have $j \in W$ for $j < k$ or equivalently $x_j = a_j \omega_j$ for $j < k$. It follows that
  \bes
  x_k = a_k \left(1 + \sum_{j=1}^{k-1} a_j \omega_j\right).
    \ees
    Equation \eqref{equ:product} now implies that
    \bes
    x_k = a_k \omega_k
    \ees
    and $k \in W$. This is a contradiction because $k \in S_m\setminus W$.
    We conclude that $W = S_m$.
    \end{enumerate}
    This completes the proof.
\end{proof}

We will now state a formula for the inverse matrix $\bm{H} = \bm{G}^{-1}$. 
\begin{theorem} \label{thm:inverse} If $d_j \not = 0$ for all $j$, then $\bm{G}$ is nonsingular and the components of the inverse matrix $\bm{G}^{-1} = \bm{H} = [h_{ij}]$ are given by
  \bes
  h_{ij} =
  \begin{cases} 
    0 & i < j, \\[10pt]
    \displaystyle 
    \frac{1}{d_j} & i = j, \\[10pt]
    \displaystyle 
    \frac{a_i}{d_j}  \frac{\omega_i}{\omega_{j+1}} & i > j.
  \end{cases}
  \ees
\end{theorem}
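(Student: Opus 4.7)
\emph{Plan.} Since $\bm{G}$ is lower triangular with nonzero diagonal entries, it is invertible, so the $j$th column $\bm{h}_{:j}$ of $\bm{H}$ is the unique solution of $\bm{G}\bm{h}_{:j}=\bm{e}_j$. I would unpack this column by column using forward substitution. The equations in rows $1,\dotsc,j-1$ force $h_{ij}=0$ for $i<j$, and the row $j$ equation immediately yields $h_{jj}=1/d_j$, which handles the first two cases in the statement.

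For $i>j$, row $i$ of $\bm{G}\bm{h}_{:j}=\bm{e}_j$ reads $-c\sum_{k=1}^{i-1}h_{kj}+d_i h_{ij}=0$. Using $h_{kj}=0$ for $k<j$ together with $h_{jj}=1/d_j$, I would divide by $d_i$ and then multiply through by $d_j$, introducing $y_\ell:=d_j h_{j+\ell,j}$ for $\ell\ge 1$. A short calculation produces the recurrence
\[
y_\ell \;=\; a_{j+\ell}\Bigl(1+\sum_{r=1}^{\ell-1} y_r\Bigr), \qquad \ell=1,2,\dotsc,m-j.
\]

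The crucial observation is that this is \emph{exactly} the substitution formula \eqref{equ:substitution} for the sub-problem obtained by striking out the first $j$ rows and columns of $\bm{G}$: the remaining $(m-j)\times(m-j)$ matrix has the same structural form, with shifted parameters $\tilde d_\ell=d_{j+\ell}$ and hence $\tilde a_\ell=a_{j+\ell}$. Applying Theorem~\ref{thm:solution} to this shifted system gives $y_\ell = \tilde a_\ell\,\tilde\omega_\ell$ where $\tilde\omega_\ell=\prod_{r=1}^{\ell-1}(1+a_{j+r})=\omega_{j+\ell}/\omega_{j+1}$. Re-indexing with $i=j+\ell$ and dividing by $d_j$ then recovers $h_{ij}=(a_i/d_j)(\omega_i/\omega_{j+1})$.

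The main conceptual step, insofar as there is one, is recognizing that the tail of the linear system inherits the self-similar structure of $\bm{G}$, so that Theorem~\ref{thm:solution} can be quoted rather than a fresh induction on $i-j$ performed. Everything else is routine bookkeeping in the forward substitution and the re-indexing of the product defining $\omega$.
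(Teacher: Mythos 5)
Your proposal is correct and follows essentially the same route as the paper: both reduce the tail of the $j$th column to a shifted instance of the system in Theorem~\ref{thm:solution} and quote that theorem, identifying $\prod_{r=1}^{\ell-1}(1+a_{j+r})=\omega_{j+\ell}/\omega_{j+1}$. The only cosmetic difference is that you rescale the unknowns by $d_j$ to make the right-hand side exactly $(c,\dotsc,c)^T$, whereas the paper keeps the factor $x_j=1/d_j$ on the right-hand side and uses linearity.
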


\begin{proof} Let $\bm{x} = \bbm x_1 & x_2 & \dotsc & x_m \ebm^T \in \Rm$ denote the $j$th column of the inverse matrix $\bm{H}$. Since $\bm{G}$ is lower triangular, we have 
\bes
x_i = \begin{cases} 
0 & i < j, \\[10pt]
\displaystyle
\frac{1}{d_j} & i = j.
\end{cases}
\ees
The remaining components of $\bm{x}$ are obtained by solving the linear system
  \bes
  \bbm
  d_{j+1} & & & \\
  -c & d_{j+2} & & \\
  \vdots & \ddots & \ddots & \\
  -c & \dots & -c & d_m
  \ebm
  \bbm
  x_{j+1} \\
  x_{j+2} \\
  \vdots \\
  x_m
  \ebm =
  x_j \bbm c \\ c \\ \vdots \\ c \ebm.
  \ees
  Now let $i \in \{1,2,\dots,m-j\}$. Then by Theorem \ref{thm:solution} we have
  \bes
  x_{j+i} = x_j \left( a_{j+i} \prod_{k=1}^{i-1} (1 + a_{j+k}) \right) = x_j a_{j+i} \prod_{k=j+1}^{j+i-1} (1 + a_k) = x_j a_{j+i} \frac{\omega_{j+i}}{\omega_{j+1}}
  \ees
  This shows that
  \bes
  x_i = \frac{a_i}{d_j} \frac{\omega_i}{\omega_{j+1}}, \quad i > j.
  \ees
  This completes the proof.
  \end{proof}

  Skeel's condition number for a nonsingular linear system $\bm{A} \bm{x} = \bm{b}$ where $\bm{A} \in \Rmm$ and $\bm{b} \in \Rm$ depends explicitly on the solution $\bm{x}$ and is given by \bes
  \kappa_\infty(\bm{A},\bm{x}) = \frac{\| |\bm{A}^{-1}||\bm{A}||\bm{x}| \|_\infty}{\|\bm{x}\|_\infty}.
  \ees
  The following result will be used to compute Skeel's condition number for systems of the form given by equation \eqref{equ:general:system}.
  
 \begin{lemma} \label{lem:skeel:intermediate-results} Assume $d_j > 0$ for all $j$ and $c>0$. Let $\bm{x}$ denote the solution of the linear system \eqref{equ:general:system}. Let $\bm{y} = |\bm{G}||\bm{x}|$ and let $\bm{z} = |\bm{G}^{-1}|\bm{y}$. Then
    \bes
    y_i = c (2 \omega_i - 1)
    \ees
    and
    \bes
    z_i = a_i (2 \omega_i - 1) + \sum_{j=1}^{i-1} a_i a_j \frac{\omega_i}{\omega_{j+1}} (2 \omega_j - 1).
    \ees
  \end{lemma}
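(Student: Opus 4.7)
The plan is to exploit the positivity assumptions to remove every absolute value sign, then compute the two matrix-vector products directly using the closed-form expressions from Theorems \ref{thm:solution} and \ref{thm:inverse}, and invoke identity \eqref{equ:product} at the critical moment to collapse a sum. First I would observe that $c>0$ and $d_j>0$ force $a_j = c/d_j > 0$ for all $j$, which makes every $\omega_k = \prod_{j=1}^{k-1}(1+a_j)$ positive and every $x_k = a_k\omega_k$ positive; hence $|\bm{x}|=\bm{x}$. The formulas in Theorem \ref{thm:inverse} show that every entry of $\bm{G}^{-1}$ is nonnegative as well, so $|\bm{G}^{-1}|=\bm{G}^{-1}$. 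The matrix $|\bm{G}|$ differs from $\bm{G}$ only in that every subdiagonal entry $-c$ is replaced by $c$.

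For the first formula I would expand the $i$th row of $|\bm{G}|\bm{x}$ directly:
\bes
y_i \;=\; d_i x_i + c\sum_{j=1}^{i-1} x_j \;=\; c\,\omega_i + c\sum_{j=1}^{i-1} a_j\omega_j,
\ees
using $d_i x_i = d_i a_i\omega_i = c\,\omega_i$ and $x_j = a_j\omega_j$ from Theorem \ref{thm:solution}. Identity \eqref{equ:product}, established inside the proof of Theorem \ref{thm:solution}, states precisely that $\sum_{j=1}^{i-1} a_j\omega_j = \omega_i - 1$, and substituting this gives $y_i = c(2\omega_i - 1)$.

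For the second formula I would expand $z_i = \sum_{j=1}^{i} h_{ij} y_j$ using the explicit formulas for $h_{ij}$ from Theorem \ref{thm:inverse}:
\bes
z_i \;=\; \frac{y_i}{d_i} + \sum_{j=1}^{i-1}\frac{a_i\omega_i}{d_j\,\omega_{j+1}}\,y_j.
\ees
Plugging in $y_j = c(2\omega_j - 1)$ for $j \le i$ and using $c/d_i = a_i$ and $c/d_j = a_j$ yields the claimed expression for $z_i$ after one line of rearrangement. The only substantive step in the whole argument is the recognition that \eqref{equ:product} folds the sum in $y_i$ into the compact form $2\omega_i - 1$; everything else is routine algebra that has been made possible by the positivity assumptions, which let the closed forms from Theorems \ref{thm:solution} and \ref{thm:inverse} be used with equality rather than inequality.
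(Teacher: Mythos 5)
Your proposal is correct and follows essentially the same route as the paper's proof: positivity removes the absolute values, Theorems \ref{thm:solution} and \ref{thm:inverse} supply the entries of $\bm{x}$ and $\bm{G}^{-1}$, and identity \eqref{equ:product} collapses the sum $\sum_{j=1}^{i-1} a_j\omega_j$ to $\omega_i - 1$ in the computation of $y_i$. No gaps; your write-up even corrects a small typo in the paper's displayed sum, which writes $a_i\omega_i$ where $a_j\omega_j$ is meant.
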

  \begin{proof}
    By Theorem \ref{thm:solution} the solution of $\bm{G}\bm{x} = \bm{f}$ is given by
    \bes
    x_k = a_k \omega_k
    \ees
    where our assumptions ensure that
    \bes
    a_k = \frac{c}{d_k} > 0, \quad \omega_k = \prod_{j=1}^{k-1} (1 + a_j) > 0.
    \ees
    This shows that $x_k > 0$ for all $k$, so $|\bm{x}| = \bm{x}$. 
    Since $|\bm{G}|$ is lower triangular, we have
    \begin{multline*}
    y_i = (|\bm{A}||\bm{x}|)_i = d_i x_i + \sum_{j=1}^{i-1} c x_j 
    = d_i a_i \omega_i + c \sum_{j=1}^{i-1} a_i \omega_i \\
    = c \omega_i + c \left[ \omega_i - 1\right] 
    = c ( 2\omega_i - 1).
    \end{multline*}
    Here we have used equation \eqref{equ:product} during the last reduction.
    By Theorem \ref{thm:inverse} the elements of $\bm{H} = \bm{G}^{-1}$ are nonnegative. Hence $|\bm{G}^{-1}| = \bm{G}^{-1}$. Since $\bm{H} = \bm{G}^{-1}$ is lower triangular, we have
    \begin{multline*}
    z_i = (|\bm{G}^{-1}| \bm{y})_i = \left(  \bm{H}  \bm{y} \right)_i = h_{ii} y_i + \sum_{j=1}^{i-1} h_{ij} y_j \\  = a_i (2 \omega_i - 1) + \sum_{j=1}^{i-1} a_i a_j \frac{\omega_i}{\omega_{j+1}} ( 2 \omega_j - 1). 
  \end{multline*}
  This completes the proof.
  \end{proof}

\section{Main results} \label{sec:main-results}
In this section we present a class of lower triangular matrices parameterized using three real numbers $a$, $b$, and $c$. 
The components of the eigenvectors tend to infinity when $\gamma = \frac{c}{b} > 1$ and the growth is at least exponential when $\gamma \ge m$ where $m$ is the dimension of the matrix. 
Regardless, the eigenvectors are well-conditioned with respect to componentwise relative perturbations of the matrix $\bm{A}$ when $b>0$ and $c>0$.

Let $a, b, c \in \R$ and consider the lower triangular matrix $A \in \Rmm$ given by
\be \label{equ:A}
a_{ij} = \begin{cases}
  0 & i < j, \\
  a+jb & i = j, \\
  -c & i > j.
\end{cases} 
\ee
The case of $m=4$ is illustrated by the matrix $\bm{A}$ given by 
\bes
\bm{A} = \bbm
a + b& & &  \\
-c & a+2b & & \\
-c & -c & a+3b & \\
-c & -c & -c & a+4b \\
\ebm.
\ees
The eigenvalues of $\bm{A}$ can be read off from the diagonal of $\bm{A}$, i.e.,
\bes
\lambda_j = a + jb
\ees
and they are trivially well-conditioned with respect to componentwise relative perturbations of the matrix.
If $b \not = 0$, then the eigenvalues are distinct and $\bm{A}$ is diagonalizable.
The eigenvectors are determined up to a scaling by the sequence $\{z_k\}_{k=0}^\infty$ given by
\be \label{equ:sequence}
z_k = \binom{\gamma + k - 1}{k}, \quad \gamma = \frac{c}{b}.
\ee
Here $\binom{x}{k}$ denotes the binomial coefficient given by
\bes
  \forall x \in \R \: \: \forall k \in \N_0\: :\:   \binom{x}{k} = \frac{\prod_{i=0}^{k-1}(x-i)}{k!} = \frac{x(x-1)(x-2) \dotsc (x-k+1)}{k!}.
\ees
Specifically, we have the following theorem.
\begin{theorem} \label{thm:eigenvectors}
  Let $\bm{A} \in \Rmm$ be given by equation \eqref{equ:A} where $a, b, c \in \R$ and $b \not = 0$. Let $\bm{X} \in \Rmm$ denote the lower triangular matrix given by
  \bes
  x_{ij} =
  \begin{cases}
    0 & i < j, \\
    z_{i-j} & i \ge j.
  \end{cases}
  \ees
  Then the $j$th column of $\bm{X}$ is an eigenvector of $\bm{A}$ with respect to the eigenvalue $\lambda_j = a + jb$.
\end{theorem}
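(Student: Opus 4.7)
The plan is to verify the eigenvector equation $\bm{A}\bm{x}^{(j)}=\lambda_j \bm{x}^{(j)}$ for each column $\bm{x}^{(j)}$ of $\bm{X}$, exploiting the fact that the nontrivial part of this equation is exactly a system of the form \eqref{equ:general:system} already solved by Theorem \ref{thm:solution}. Since the first $j-1$ entries of $\bm{x}^{(j)}$ vanish and $\bm{A}$ is lower triangular, the equations in rows $1,\dotsc,j-1$ are trivially satisfied. Because $x_{jj}=z_0=1$, the equation in row $j$ reduces to $a_{jj}=\lambda_j$, which holds by construction. The real work lies in rows $i>j$.

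For those rows, I would rewrite the condition as $(\bm{A}-\lambda_j \bm{I})\bm{x}^{(j)}=\bm{0}$. Restricted to indices $j+1,\dotsc,m$, the matrix $\bm{A}-\lambda_j \bm{I}$ has diagonal entries $a_{ii}-\lambda_j=(i-j)b$ and subdiagonal entries $-c$. Moving the single contribution from column $j$ to the right-hand side (using $x_{jj}=1$), the unknowns $x_{j+1,j},x_{j+2,j},\dotsc,x_{mj}$ must satisfy a linear system of exactly the form \eqref{equ:general:system}, with dimension $m-j$, diagonal entries $d_k=kb$, and constant right-hand side $c$. Since $b\ne 0$, we have $d_k\ne 0$ for all $k$, so Theorem \ref{thm:solution} applies and yields the unique solution $x_{j+k,j}=a_k\omega_k$ with $a_k=c/(kb)=\gamma/k$ and $\omega_k=\prod_{i=1}^{k-1}(1+\gamma/i)$.

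It then remains to identify $a_k\omega_k$ with the binomial coefficient $z_k=\binom{\gamma+k-1}{k}$. Writing $1+\gamma/i=(i+\gamma)/i$, the product telescopes in the denominator to $(k-1)!$, giving
\[
a_k \omega_k = \frac{\gamma}{k}\cdot\frac{(1+\gamma)(2+\gamma)\dotsm(k-1+\gamma)}{(k-1)!} = \frac{\gamma(\gamma+1)\dotsm(\gamma+k-1)}{k!},
\]
which is precisely $\binom{\gamma+k-1}{k}=z_k$. This matches $x_{j+k,j}=z_k$ as required by the statement of the theorem.

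The only real obstacle is bookkeeping: one must be careful that the subsystem extracted from $\bm{A}-\lambda_j \bm{I}$ really has the exact shape of \eqref{equ:general:system} (constant subdiagonal $-c$, constant right-hand side $c$, and diagonal indexed so that $d_k=kb$ rather than $d_k=(j+k)b$). Once this index shift is in place, the remainder is a routine application of Theorem \ref{thm:solution} followed by the elementary manipulation of the binomial coefficient shown above. Note that the argument does not require $b$ or $c$ to be positive; only $b\ne 0$ is used here, the positivity hypotheses being reserved for the conditioning analysis.
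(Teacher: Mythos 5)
Your proposal is correct and follows essentially the same route as the paper: reduce the nontrivial rows of $(\bm{A}-\lambda_j\bm{I})\bm{x}^{(j)}=\bm{0}$ to a system of the form \eqref{equ:general:system} with $d_k=kb$, apply Theorem \ref{thm:solution}, and identify $a_k\omega_k$ with $\binom{\gamma+k-1}{k}$ via the telescoping product. Your explicit checks of rows $1,\dotsc,j$ and your remark that only $b\ne 0$ is needed are minor refinements of the same argument.
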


\begin{proof} Consider the problem of computing an eigenvector $\bm{x}$ of $\bm{A}$ with respect to the eigenvalue $\lambda_l = a + lb$. We have $x_i = 0$ for $i<l$ and we are free to choose $x_l= z_0 = 1$, provided the nontrivial components of $\bm{x}$, i.e., the vector $\bbm x_{l+1},\dotsc,x_m\ebm^T \in \R^{m-l}$ solves the linear system
  \bes
  \bbm
  b & & & \\
  -c & 2b & & \\
  \vdots & \ddots & \ddots & \\
  -c & \dots & -c & (m-l)b \\
  \ebm
  \bbm
  x_{l+1} \\ x_{l+2} \\ \vdots \\ x_{m}
  \ebm =
  \bbm
  c \\ c \\ \vdots \\ c 
  \ebm.
  \ees
  This linear system has dimension $m-l$. It is a special case of Theorem \ref{thm:solution} and  corresponds to the choice of $d_j = jb$ and $a_j = \frac{\gamma}{j}$. Let $y_i = x_{l+i}$ for $i=1,2\dotsc,m-l$, then by Theorem \ref{thm:solution} we have
  \bes
  y_k = a_k \omega_k = a_k \prod_{j=1}^{k-1} ( 1 + a_j) = 
  \frac{\gamma}{k} \prod_{j=1}^{k-1} \left(1 + \frac{\gamma}{j} \right) = \frac{\gamma}{k} \prod_{j=1}^{k-1} \frac{\gamma+j}{j} = \binom{\gamma+k-1}{k} = z_k.
  \ees
  We conclude that 
  \bes
  x_i =
  \begin{cases}
    0 & i < l, \\
    z_{i-l} & i \ge l.
  \end{cases}
  \ees
  This completes the proof.
\end{proof}

The behavior of the sequence given by equation \eqref{equ:sequence} is entirely controlled by the ratio $\gamma = \frac{c}{b}$. In particular, we have the following lemma.

\begin{lemma} \label{lem:asymptotics} Let $\alpha \in \R$ be any real number and let $\{y_k\}_{k=0}^\infty \subset \R$ be the sequence given by
\bes
y_k = \binom{\alpha + k}{k}.
\ees
Then the following statements hold:
\begin{enumerate}
    \item 
    If $\alpha > 0$, then $\{y_k\}_{k=0}^\infty$ is increasing and
\bes
y_k \rightarrow \infty, \quad k \rightarrow \infty, \quad k \in \N_0.
\ees
\item If $\alpha = 0$, then $y_k = 1$ for all $k \in \N_0$. 
\item If $\alpha < 0$ is an integer, then $y_k = 0$ for all sufficiently large $k \in \N_0$.

\item If $\alpha < 0$ is not an integer, then $y_k \not =0$, but
\bes
y_k \rightarrow 0, \quad k \rightarrow \infty, \quad k \in \N_0.
\ees
Moreover, the convengence is strictly monotone for all sufficently large $k$ and the rate of convergence is sublinear.
\end{enumerate}
\end{lemma}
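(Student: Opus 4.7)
The plan is to reduce everything to a single simple recursive identity. Since
\[
y_k \;=\; \binom{\alpha+k}{k} \;=\; \frac{(\alpha+k)(\alpha+k-1)\cdots(\alpha+1)}{k!} \;=\; \prod_{j=1}^{k}\left(1+\frac{\alpha}{j}\right),
\]
one has the telescoping ratio
\[
\frac{y_{k+1}}{y_k} \;=\; 1 + \frac{\alpha}{k+1},
\]
and essentially the whole lemma is obtained by reading off this ratio for each sign/integrality class of $\alpha$.

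\textbf{Cases 1 and 2.} If $\alpha=0$ the factors are all $1$, so $y_k=1$. If $\alpha>0$, every factor $1+\alpha/j$ exceeds $1$, so $\{y_k\}$ is strictly increasing. For divergence I would take logarithms and use the standard comparison $\log(1+\alpha/j) \ge \tfrac{\alpha}{j+\alpha}$ (or any convenient lower bound behaving like $\alpha/j$) to obtain $\log y_k \ge \alpha \sum_{j=1}^k \frac{1}{j+\alpha} \to \infty$ from the divergence of the harmonic series.

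\textbf{Case 3.} If $\alpha=-n$ with $n\in\N$, then the factor $1+\alpha/n = 0$ occurs at $j=n$, and once it appears in the product it remains there for every larger $k$. Hence $y_k=0$ for all $k\ge n$, which is the claim.

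\textbf{Case 4.} If $\alpha<0$ is not an integer, every factor $1+\alpha/j$ is nonzero (since $-\alpha\notin\N$), so $y_k\ne 0$ for all $k$. For $k+1 > -\alpha$ the ratio $1+\alpha/(k+1)$ lies strictly in $(0,1)$, which immediately gives strict monotone decrease of $|y_k|$ past this threshold and preservation of sign. The sublinear rate follows because the same ratio tends to $1$ as $k\to\infty$. Finally, to obtain $y_k\to 0$, I would again pass to logarithms: $\log|y_k|=\sum_{j=1}^k\log|1+\alpha/j|$, and use $\log(1+\alpha/j)\le \alpha/j$ (valid once $1+\alpha/j>0$) combined with the divergence of $\sum 1/j$ to push $\log|y_k|\to -\infty$. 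The mildly fiddly part is handling the finitely many early indices $j\le -\alpha$ where factors may be negative or where the crude inequality for $\log(1+x)$ does not apply; these contribute only a bounded additive constant and therefore do not affect the limit.

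The only real obstacle is Case 4: one must be careful to separate the transient regime $j\le -\alpha$ (where signs may flip and monotonicity can fail) from the tail $j>-\alpha$ (where everything stabilizes), and then combine the bounded contribution from the transient with the divergence-to-$-\infty$ of the tail sum of logarithms. All remaining cases are immediate consequences of the ratio identity.
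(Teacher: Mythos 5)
Your proposal is correct and follows essentially the same route as the paper: rewrite $y_k$ as the product $\prod_{j=1}^{k}(1+\alpha/j)$, read off cases 2 and 3 directly, and handle cases 1 and 4 by taking logarithms, splitting off the finitely many "transient" factors, and comparing the tail sum with the divergent harmonic series. The only differences are cosmetic (you use $\log(1+x)\ge x/(1+x)$ and $\log(1+x)\le x$ where the paper uses $\log(1+x)\ge \tfrac{1}{2}x$ on $[0,1]$), and your explicit caution about the sign of the transient factors in case 4 is a point the paper glosses over slightly.
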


\begin{proof} When studying the different cases it is convenient to exploit that $y_k$ can be rewritten as
\begin{equation} \label{equ:rewrite:yk}
    y_k = \frac{\prod_{j=1}^k(\alpha+j)}{k!} = \prod_{j=1}^k\left(1 + \frac{\alpha}{j} \right).
\end{equation}
\begin{enumerate}
    \item If $\alpha > 0$ then equation \eqref{equ:rewrite:yk} shows that $y_k$ is strictly positive and the sequence is increasing because
    \bes
        y_{k+1} = \left(1 + \frac{\alpha}{k+1} \right) y_k > y_k.
    \ees
    Now choose any $l \in \N$ such that $\alpha \leq l$ and consider any $k \ge l$. Then
    \bes 
        y_k = \prod_{j=1}^k \left(1+\frac{\alpha}{j} \right) = C p_k
    \ees
    where we have introduced
\begin{equation} \label{equ:factors}
    C = \prod_{j=1}^{l-1} \left(1+\frac{\alpha}{j} \right), \quad p_k = \prod_{j=l}^k \left(1+\frac{\alpha}{j} \right).
    \end{equation}
    By design, $C > 0$ is a constant that is independent of $k$.
    We now investigate the convergence of the sequence $\{p_k\}_{k=l}^{\infty}$. 
    We have
    \bes
    \log p_k = \sum_{j=l}^k \log\left(1 + \frac{\alpha}{j}\right) \ge \sum_{j=l}^k \frac{1}{2} \frac{\alpha}{j}
    \ees
    simply because 
    \bes
        \forall x \in [0,1] \: : \: \log(1+x) \ge \frac{1}{2}{x}.
    \ees
    Since the harmonic series is divergent we first conclude that 
    \bes
    p_k \rightarrow \infty, \quad k \rightarrow \infty, \quad k \ge l.
    \ees
    and then
    \bes
    y_k = Cp_k \rightarrow \infty, \quad k \rightarrow \infty, \quad k \in \N_0.
    \ees
    because $C>0$.
  \item If $\alpha = 0$, then for all $k \in \N_0$:
  \bes
      y_k = \frac{\prod_{j=1}^{k} (\alpha + j)}{k!} =\frac{\prod_{j=1}^{k} j}{k!} = \frac{k!}{k!} = 1.
  \ees
  \item If $\alpha < 0$ is an integer, then $\alpha = -l$ for exactly one $l \in \N$, and
  \bes
     \forall k \ge l \: : \: y_k = \frac{\prod_{j=1}^{k} (\alpha + j)}{k!} =  \frac{\prod_{j=1}^{k} (j-l)}{k!} = 0
  \ees
   simply because the term corresponding to $j=l$ is zero.
  \item If $\alpha < 0$ is not an integer, then equation \eqref{equ:rewrite:yk} shows that $y_k \not = 0$ because no term is zero. We also have $-l < \alpha < 1-l$ for exactly one $l \in \N$. Now let $k \ge l$, then we again write
  \bes
  y_k =  C p_k
  \ees
  where $C$ and $p_k$ are given by equation \eqref{equ:factors}. We again investigate the convergence of the sequence $\{p_k\}_{k=l}^{\infty}$. Since $\alpha < 0$ we have 
  \be \label{equ:pk:1}
      p_k = \prod_{j=l}^k \left(1-\frac{|\alpha|}{j} \right).
  \ee
  This shows that $p_k$ is a product of strictly positive terms because $|\alpha| < l$.
  It follows that 
  \bes
      - \log p_k = \sum_{j=l}^k -\log\left( 1 - \frac{|\alpha|}{j}\right) \ge \sum_{j=l}^k \frac{1}{2} \frac{|\alpha|}{j}
    \ees
    simply because
    \bes
        \forall x \in [0,1] \: : \: -\log(1-x) \ge -\frac{1}{2} x.
    \ees
    The divergence of the harmonic series now implies that
  \bes
  - \log(p_k) \rightarrow \infty, \quad k \rightarrow \infty, \quad k \ge l.
  \ees
  We can now conclude that 
  \bes
  p_k \rightarrow 0, \quad k \rightarrow \infty, \quad k \geq l.
  \ees
  It follows that
  \bes
  y_k = C p_k \rightarrow 0, \quad k \rightarrow \infty, \quad k \in \N_0.
  \ees
  The convergence is strictly monotone for $k \ge l$ simply because 
  \be
  p_{k+1} = p_k \left( 1 - \frac{|\alpha|}{k+1}\right) < p_k.
  \ee
  The convergence is sublinear, because
  \bes
  \frac{y_{k+1}}{y_k} = \frac{\alpha + k + 1}{k+1} \rightarrow 1, \quad k \rightarrow \infty, \quad k \in \N_0.
  \ees
  \end{enumerate}
  This completes the proof.
\end{proof}

By Theorem \ref{thm:eigenvectors} and Lemma \ref{lem:asymptotics} the eigenvectors of the matrix $\bm{A}$ will \emph{eventually} exceed the representational range provided $\gamma = \frac{c}{b} > 1$ and the dimension of the matrix is sufficiently large. 
However, if we increase the size of $\gamma$, then the eigenvectors grow at least exponentially. We have the following lemma.

\begin{lemma} \label{lem:exponential-growth} Let $\bm{X} = [x_{ij}] \in \Rmm$ denote the lower triangular matrix given by Theorem \ref{thm:eigenvectors} and let $\gamma$ denote the ratio $\gamma = \frac{c}{b}$. If $\gamma \ge m$, then
  \bes
  x_{ij} \ge 2^{i-j}, \quad 1 \leq j \leq i \leq m.
  \ees
\end{lemma}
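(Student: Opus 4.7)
The plan is to reduce the claim to an inequality on the one-parameter sequence $z_k = \binom{\gamma+k-1}{k}$ and then prove the latter by induction on $k$.

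First, by Theorem \ref{thm:eigenvectors} the lower triangular structure of $\bm{X}$ is $x_{ij}=z_{i-j}$ for $i\ge j$. Setting $k=i-j$, the claim $x_{ij}\ge 2^{i-j}$ is equivalent to
\bes
z_k \ge 2^k \quad \text{for all } 0\le k\le m-1.
\ees
So the matrix-level statement collapses to a scalar-level statement about the sequence $\{z_k\}$, independently of $i$ and $j$ separately.

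Next, I would compute the consecutive ratio. Using either the definition of the binomial coefficient or the product formula $z_k=\frac{\gamma}{k}\prod_{j=1}^{k-1}\frac{\gamma+j}{j}$ derived in the proof of Theorem \ref{thm:eigenvectors}, a direct calculation gives
\bes
\frac{z_k}{z_{k-1}} = \frac{\gamma+k-1}{k}, \qquad k\ge 1.
\ees
The doubling inequality $z_k\ge 2 z_{k-1}$ then amounts to $\gamma+k-1\ge 2k$, i.e.\ $\gamma\ge k+1$.

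Finally, I would feed in the hypothesis $\gamma\ge m$. Since we only need the doubling step for $k=1,2,\dotsc,m-1$, and the most stringent requirement occurs at $k=m-1$ where we need $\gamma\ge m$, the hypothesis is exactly tailored to the range of $k$ that matters. A one-line induction starting from $z_0=1=2^0$ and using the doubling inequality then yields $z_k\ge 2^k$ for all $0\le k\le m-1$, which is the desired bound on $x_{ij}$.

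There is essentially no obstacle: the argument is a telescoping induction once the ratio formula is written down. The only subtlety worth pointing out in the writeup is that the hypothesis $\gamma\ge m$ is sharp for this exponential bound, because the worst case $k=m-1$ saturates the requirement $\gamma\ge k+1$.
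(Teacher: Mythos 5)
Your proposal is correct and is essentially the paper's own argument: the paper writes $z_k$ as the product $\prod_{l=0}^{k-1}\frac{\gamma+l}{l+1}$ and shows each factor is at least $2$ under the hypothesis $\gamma \ge m$, which is exactly your telescoping induction on the consecutive ratio $z_k/z_{k-1}=\frac{\gamma+k-1}{k}$ phrased multiplicatively. Your added observation that the case $k=m-1$ makes the hypothesis $\gamma\ge m$ sharp for this factor-wise bound is a nice touch but does not change the substance.
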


\begin{proof} Let $i \ge j$ be given and set $k = i-j \ge 0$. Then
  \bes
  x_{ij} = z_k = \binom{\gamma+k-1}{k} = \prod_{l=0}^{k-1} \frac{\gamma+l}{l+1} \ge \prod_{l=0}^{k-1} \frac{m+l}{l+1}. 
  \ees
  We now claim that 
  \be \label{equ:quotient}
   \forall l \in \{0,1,2\dotsc, k  - 1\} \: : \: \frac{m + l}{l+1} \ge 2.
  \ee
  We have
  \bes
   \frac{m + l}{l+1} \ge 2  \quad \Leftrightarrow \quad m+l  \ge 2(l+1) \quad \Leftrightarrow \quad m \ge l + 2.
  \ees
  We have $l \leq k - 1$ and $k \leq m-1$. It follows that
  \bes
  l + 2 \leq (k-1) + 2 = k + 1 \leq (m-1) + 1 = m.
  \ees
  This shows that inequality \eqref{equ:quotient} is satisfied. The immediate implication is that
  \bes
  x_{ij} \geq 2^k = 2^{i-j}.
  \ees
  This completes the proof.
\end{proof}

\begin{remark} We emphasize that the growth of the components of the matrix $\bm{X}$ is independent of the location and clustering of the eigenvalues of $\bm{A}$. If $a \not = 0$, and if $\frac{mb}{a}$ is small, then the eigenvalues $\lambda_j = a + jb$ are clustered near $a$. If $a=0$ and $b\not=0$, then the eigenvalues are not clustered anywhere. In any case, it is the fraction $\gamma = \frac{c}{b}$ and not the value of $a$ that decides the behavior of the eigenvectors.
\end{remark}


We now study the conditioning of the eigenvectors of the matrix $\bm{A}$ given by equation \eqref{equ:A}. We consider perturbations $|\bm{A}|$ that are bounded componentwise relative to $\bm{A}$, i.e.,
\bes
|\Delta \bm{A}| \leq \epsilon |\bm{A}|,
\ees
where $\epsilon >0$ is a small number.
The eigenvectors of the matrix $\bm{A}$ are determined by the columns of the matrix $\bm{X}$ given in Theorem \ref{thm:eigenvectors}. The nontrivial components of the $j$th column of $\bm{X} = [x_{ij}]$ are computed as the solution of a linear system of the form
\be \label{equ:special:system}
\bm{B} \bm{x} =
\bbm
b & & &  \\
-c & 2b & & \\
\vdots & \ddots & \ddots & \\
-c & \dots & -c & n b 
\ebm \bbm
x_{j+1,j} \\ x_{j+2,j} \\ \vdots \\ x_{m,j}
\ebm=
\bbm
c \\ c \\ \vdots \\ c
\ebm
=: \bm{f}
\ee
where the dimension of the system is $n = m - j$. It is straightforward to verify that componentwise relative perturbations of $\bm{A}$ induces componentwise relative perturbations of $\bm{B}$ and $\bm{f}$. 
In this situation, the relevant condition number is Skeel's condition number given by
\bes
\kappa_\infty(\bm{B},\bm{f}) = \frac{\||\bm{B}^{-1}| |\bm{B}| |\bm{x}| \|_\infty}{\|\bm{x}\|_\infty}.
\ees
We have the following theorem.

\begin{theorem} \label{thm:skeel-condition-number:bound} Let $b > 0$ and $c>0$ satisfy $\gamma = \frac{c}{b} > 1$. Let $\bm{B} \in \Rnn$ and $\bm{f} \in \Rn$ be as in equation \eqref{equ:special:system}. Then Skeel's condition number satisfies
  \bes
  \kappa_\infty(\bm{B},\bm{f}) \leq 2 \left( 1 + \gamma \log \left(\frac{\gamma + n - 1}{\gamma} \right) \right).
  \ees
\end{theorem}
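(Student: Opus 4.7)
My plan is to apply Lemma \ref{lem:skeel:intermediate-results} directly to $\bm{B}$, which fits the template of equation \eqref{equ:general:system} with $d_j = jb$, so that $a_j = \gamma/j$ and $\omega_k = \prod_{j=1}^{k-1}(1 + \gamma/j)$ is strictly increasing with $\omega_1 = 1$. By Theorem \ref{thm:solution} the solution has components $x_k = a_k \omega_k$, and the ratio
\[
\frac{x_{k+1}}{x_k} = \frac{a_{k+1}\omega_{k+1}}{a_k \omega_k} = \frac{k+\gamma}{k+1}
\]
exceeds $1$ because $\gamma > 1$. Hence $\bm{x}$ is strictly increasing and $\|\bm{x}\|_\infty = x_n$.

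Next I would rewrite the formula for $z_i$ supplied by the lemma by pulling the factor $x_i = a_i\omega_i$ out:
\[
\frac{z_i}{x_i} = \left(2 - \frac{1}{\omega_i}\right) + \sum_{j=1}^{i-1} \frac{a_j(2\omega_j - 1)}{\omega_{j+1}}.
\]
The first term is increasing in $i$ and the sum only grows as $i$ increases, so $z_i/x_i$ is non-decreasing. Combined with the monotonicity of $x_i$ this gives $z_i \le (z_n/x_n)\, x_i \le z_n$, so $\|\bm{z}\|_\infty = z_n$ and therefore $\kappa_\infty(\bm{B}, \bm{f}) = z_n/x_n$.

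It then remains to bound $z_n/x_n$. Using $\omega_{j+1} = (1 + a_j)\omega_j$, each summand simplifies to
\[
\frac{a_j(2\omega_j - 1)}{\omega_{j+1}} = \frac{a_j}{1+a_j}\left(2 - \frac{1}{\omega_j}\right) \le \frac{2 a_j}{1+a_j} = \frac{2\gamma}{j+\gamma},
\]
and the leading term satisfies $2 - 1/\omega_n \le 2$. A standard integral comparison
\[
\sum_{j=1}^{n-1} \frac{1}{j+\gamma} \le \int_0^{n-1}\frac{dt}{t+\gamma} = \log\!\left(\frac{\gamma + n - 1}{\gamma}\right)
\]
then produces the claimed inequality.

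The only delicate step in this plan is the identification of where the infinity norms of $\bm{x}$ and $\bm{z}$ are attained. The assumption $\gamma > 1$ is precisely what forces $\bm{x}$ to be monotone; once $\|\bm{x}\|_\infty = x_n$ is established, the monotonicity of $z_i/x_i$ follows automatically from the nonnegativity of the summands, and the remaining estimates are routine algebra combined with the harmonic integral bound.
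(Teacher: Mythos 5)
Your proof is correct and follows essentially the same route as the paper's: both rest on Lemma \ref{lem:skeel:intermediate-results}, the estimate $a_j(2\omega_j-1)/\omega_{j+1} \le 2a_j/(1+a_j) = 2\gamma/(\gamma+j)$ via $\omega_{j+1}=(1+a_j)\omega_j$, and the integral comparison with $\int_0^{n-1}(t+\gamma)^{-1}\,dt$. The only difference is that you first locate the maxima ($\|\bm{x}\|_\infty = x_n$ and $\|\bm{z}\|_\infty = z_n$, using $\gamma>1$), whereas the paper sidesteps this by bounding $z_i \le 2x_i(1+\gamma\log(\tfrac{\gamma+i-1}{\gamma})) \le 2\|\bm{x}\|_\infty(1+\gamma\log(\tfrac{\gamma+n-1}{\gamma}))$ for every $i$; your extra step is sound but not needed for the stated bound.
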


\begin{proof} Let $\bm{x} \in \Rn$ denote the solution of equation \eqref{equ:special:system}. Let $\bm{z} = |\bm{B}^{-1}||\bm{B}||\bm{x}|$. Our objective is to show that 
\bes
\frac{\|\bm{z}\|_\infty}{\|\bm{x}\|_\infty} \leq 2 \left( 1 + \gamma \log \left(\frac{\gamma + n - 1}{\gamma} \right) \right).
\ees 
By the definition of $\bm{z}$ we have $0 \leq z_i$. We will now bound $z_i$ from above. By Lemma \ref{lem:skeel:intermediate-results} we have
  \bes
  z_i = a_i ( 2 \omega_i - 1) + \sum_{j=1}^{i-1} a_i a_j \frac{\omega_i}{\omega_{j+1}}( 2 \omega_j-1)
    \ees
    where $a_i = \frac{c}{d_i} > 0$ and $\omega_{i} = \prod_{k=1}^{i-1} (1 + a_k) > 0$. It is clear that
    \bes
   z_i \leq 2 \left( a_i \omega_i + \sum_{j=1}^{i-1} a_i a_j \frac{\omega_i}{\omega_{j+1}} \omega_j \right) = 2 x_i \left(1  + \sum_{j=1}^{i-1} \frac{a_j}{1 + a_j} \right).
   \ees
   We now utilize that $a_j = \frac{c}{d_j} = \frac{c}{jb} = \frac{\gamma}{j}$. This implies
   \bes
   \sum_{j=1}^{i-1} \frac{a_j}{1 + a_j} = \sum_{j=1}^{i-1} \frac{\gamma}{\gamma + j} \leq \int_0^{i-1} \frac{\gamma}{\gamma+x}dx = \gamma \log \left( \frac{\gamma+i-1}{\gamma} \right).
   \ees
Here we have used that the continuous function $x \rightarrow \frac{\gamma}{\gamma+x}$ is decreasing on the interval $[0,i-1]$. We conclude that
   \bes
   z_i \leq 2 x_i \left( 1 + \gamma \log \left( \frac{\gamma + i - 1 }{\gamma} \right)\right) \leq 2 \|\bm{x}\|_\infty  \left( 1 + \gamma \log \left( \frac{\gamma + n - 1 }{\gamma} \right) \right).
   \ees
   This implies
   \bes
   \|\bm{z}\|_\infty \leq 2 \|\bm{x}\|_\infty \left( 1 + \gamma \log \left( \frac{\gamma + n - 1}{\gamma} \right) \right)
   \ees
   and the proof is complete.
 \end{proof}

By Lemma \ref{lem:exponential-growth} the eigenvectors grow at least exponentially when $\gamma = \frac{c}{b} = m$. However, if $b > 0$ and $c > 0$, then the eigenvectors are well-conditioned with respect to componentwise relative perturbations of the matrix because 
 \bes
 \kappa_\infty(\bm{B},\bm{f}) \leq 2 \left( 1 + m \log(2) \right).
 \ees
This is less surprising when we consider that the eigenvectors can be computed using additions and divisions of real numbers which are strictly positive.

\section{Upper triangular problems} \label{sec:upper-triangular-problems}

The choice of using lower rather than upper triangular matrices was made exclusively for pedagogical reasons. 
We find it simpler to apply the well-ordering principle when we are moving forward.
We pass from lower triangular eigenvalue problems $\bm{A}\bm{x}=\lambda \bm{x}$ to equivalent upper triangular problems with ease.
Simply replace the lower triangular matrices $\bm{A}$ and $\bm{X}$ with the upper triangular matrices $\bm{A}' = \bm{J}\bm{A}\bm{J}$ and $\bm{X}' = \bm{J}\bm{X}\bm{J}$ where $\bm{J}$ is the anti-diagonal identity matrix.
This similarity transformation reverses the numbering of the rows and columns of the matrices $\bm{A}$ and $\bm{X}$.
The specific example given in the introduction is transformed into
\be
\bm{A}' = 
\bbm
5 & -5 & -5 & -5 & -5 \\
& 4 & -5 & -5 & -5 \\
& & 3 & -5 & -5 \\
& & & 2 & -5 \\
& & & & 1 
\ebm, \quad \bm{X'} =
\bbm
1 & 5 & 15 & 35 & 70 \\
& 1 & 5 & 15 & 35 \\
& & 1 & 5 & 15  \\
& & & 1 & 5 \\
& & & & 1 &
\ebm.
\ee

\section{Conclusion} \label{sec:conclusion}

We have shown that there exists matrices for which the eigenvalues and eigenvectors are well-conditioned with respect to componentwise relative perturbations of the matrix. 
However, the eigenvectors cannot be computed using \emph{regular} substitution because they exceed the representational range.
These matrices can be used to test subroutines for computing eigenvectors as these subroutines must be able to deal successfully with overflow.

\bibliographystyle{acm}
\bibliography{references}

\end{document}